\newtheorem{theorem}{Theorem}[section]
\newtheorem{lemma}[theorem]{Lemma}
\newtheorem{corollary}[theorem]{Corollary}
\numberwithin{equation}{section}
\DeclareMathOperator{\divv}{div}
\DeclareMathOperator{\Vol}{Vol}
\DeclareMathOperator{\Area}{Area}
\begin{document}

\title{A simple proof for Bernstein type theorems in Gauss space}

\author{Doan The Hieu}
\address{Department of Mathematics, College of Education, Hue University, Vietnam}
\curraddr{Department of Mathematics, College of Education, Hue University, Vietnam}
\email{dthehieu@yahoo.com}

\subjclass[2000]{Primary 53C42; Secondary 53C50; 53C25 }

\date{}


\keywords{ Bernstein type Theorem, self-shrinkers, $\lambda$-hypersurfaces, Gauss spaces}

\begin{abstract}
A weighted area estimate for  entire graphs with bounded weighted mean curvature in Gauss space is given by a simple proof. Bernstein type theorems for self shrinkers (\cite {wa}) as well as for graphic $\lambda$-hypersurfaces (\cite{ chwe2}) follow immediately as consequences. 
\end{abstract}

\maketitle
\section{Introduction}
A manifold with density is a Riemannian manifold with a positive function $e^{-f}$ used to weight both volume and perimeter area. The weighted volume of a region $E$ is $\Vol_f(E)=\int_Ee^{-f}dV$ and the weighted area of a hypersurface $\Sigma$ is $\Area_f(\Sigma)=\int_{\Sigma}e^{-f}dA_{\Sigma},$ where $dV$ and $dA$ are the $(n+1)$-dimensional Riemannian volume and $n$-dimensional Riemannian perimeter area elements, respectively. 

The weighted mean curvature of a hypersurface $\Sigma$ on such a manifold
 is defined as follows
$$ \label {Hf}
    H_{f}(\Sigma)=H(\Sigma)+\langle \nabla f,{\bf n}\rangle,$$
where ${\bf n}$ is the unit
normal vector field and $H=-\divv {\bf n}$ is the Euclidean mean curvature of the hypersurface.  If $H_f(\Sigma)=\lambda,$ a constant, then $\Sigma$ is called a $\lambda$-hypersurface and if $H_f(\Sigma)=0,$ then $\Sigma$ is said to be $f$-minimal.

Gauss space $\mathbb G^{n+1},$ Euclidean space $\mathbb R^{n+1}$ with Gaussian probability density $e^{-f}=(2\pi)^{-\frac {n+1}2}e^{-\frac{|x|^2}2},$ is a typical example of a manifold with density and very interesting to probabilists. For more details about manifolds with density, we refer the reader to \cite{mo1}, \cite{mo2}, \cite{mo5}, \cite{muwa}.

In Gauss space, $f$-minimal hypersurfaces are self-shrinkers  and hyperplanes are $\lambda$-hypersurfaces. The weighted mean curvature of the hyperplane $\sum_{i=1}^{n+1}a_ix_i+a_0=0$ is $\frac {-a_0}{(\sum_{i=1}^{n+1}+a_i^2)^{1/2}}.$
It is well-known that, hyperplanes solve the weighted isoperimetric problem, i.e.,
 they minimize weighted area for given weighted volume (see \cite{bo}, \cite{suts}).

In this short paper, by a simple proof, we give a weighted area estimate for entire graphs with bounded weighted mean curvature. The Bernstein type theorems for graphic self-shrinkers (\cite{wa}) as well as for graphic $\lambda$-hypersurfaces (\cite{chwe2}) follow immediately as consequences. These results were also proved by Ecker and Huisken \cite{echu} and Guang \cite {gu}, respectively, under the polynomial volume growth conditions.
\section{Weighted area estimate and Bernstein type theorems}

In $\mathbb G^{n+1},$ let $\Sigma$ be the graph of a smooth function $u(\textbf x)=x_{n+1}, \textbf x\in \mathbb R^n$ and  ${\bf n} $ be its  upward unit normal field. Extending ${\bf n}$ by translations  along $x_{n+1}$-axis we obtain
 a smooth vector field on $\mathbb R^{n+1},$ also denoted by ${\bf n}.$  Along any vertical line, since $\divv({\bf n})$ is unchange while $\langle\nabla f, {\bf n}\rangle$ is increasing,  $H_f=-\divv({\bf n})+\langle\nabla f, {\bf n}\rangle$ is increasing.
 
Consider the $n$-differential form
$$ w(X_1, X_2,\ldots, X_{n})= \det(X_1, X_2,\ldots, X_{n}, {\bf n}),$$
 where $X_i,\ i=1,2,\ldots, n$ are smooth vector fields.
 It follows that
  $|w(X_1, X_2,\ldots, X_{n})|\le 1,$ for every unit normal vector fields  $X_i,\ i=1,2,\ldots, n$ and the equality holds if and only if
       $X_1, X_2,\ldots, X_{n}$ are tangent to $\Sigma.$

Denote by:
\begin{itemize}
\item $E_1=\{(\textbf x, x_{n+1}) : x_{n+1}\le u(\textbf x)\},\ E_2=\{(\textbf x, x_{n+1}) : x_{n+1}\le a\},\ a\in \mathbb R,$ such that $\Vol_f(E_1)=\Vol_f(E_2)$ and $P$  the hyperplane $x_{n+1}=a;$
\item $F=(E_1-E_2)\cup (E_2-E_1),$  the region bounded by $P$ and $\Sigma;$ 
 \item $F^+=E_2-E_1$ and $F^-=E_1-E_2,$ the parts of $F$ above  and under $\Sigma,$ respectively;
 \item  $B_R, S_R$ the $(n+1)$-ball and $n$-hypershere in $\mathbb R^{n+1}$ with center $O$ and radius $R,$ respectively; 
 \item $\Sigma_R=\Sigma \cap B_R,\ P_R=P\cap B_R,\ F_R=F\cap B_R,\  F^+_R=F^+\cap B_R$ and $F^-_R=F^-\cap B_R.$
     \end{itemize}

\begin{lemma}
If $H_f(\Sigma)$ is bounded, then 
\begin{equation}\label{eq3}
\Area_f (\Sigma)\le \Area_f(P)+\frac 12(M-m)\Vol_f(F),
\end{equation}
where $M=\sup H_f(\Sigma)$ and $ m=\inf H_f(\Sigma).$ 
\end{lemma}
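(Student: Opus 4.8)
The plan is to run the weighted divergence theorem against the extended field ${\bf n}$, using the form $w$ to read off weighted areas on $\Sigma$ and $P$, and then to control the interior term by the monotonicity of $H_f$ together with the volume balance. First I would record that $e^{-f}w$ is almost closed: writing $\divv_f X=\divv X-\langle\nabla f,X\rangle$ for the weighted divergence, one has $\divv_f {\bf n}=-H_f$, equivalently $d(e^{-f}w)=-H_f\,e^{-f}\,dV$. Applying Stokes' theorem on $F^+_R$ and on $F^-_R$ and adding, the boundary splits into three pieces. On the $\Sigma$-pieces $e^{-f}w$ restricts to the weighted area element --- this is the equality case $|w|=1$ of the stated bound, since the tangent frame of $\Sigma$ is exactly where $w$ is extremal --- so these contribute $\Area_f(\Sigma\cap B_R)$. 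On the $P$-pieces one gets $\int_{P\cap B_R}\langle {\bf n},{\bf e}_{n+1}\rangle e^{-f}\,dA$, which by $|w|\le 1$ is at most $\Area_f(P\cap B_R)$. The interior pieces contribute $\int_{F^+_R}H_f e^{-f}\,dV-\int_{F^-_R}H_f e^{-f}\,dV$. Thus, in the limit, I expect an identity of the shape
\[
\Area_f(\Sigma)=\int_P\langle {\bf n},{\bf e}_{n+1}\rangle e^{-f}\,dA+\int_{F^+}H_f e^{-f}\,dV-\int_{F^-}H_f e^{-f}\,dV .
\]

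Next I would exploit the two structural facts. The hypothesis $\Vol_f(E_1)=\Vol_f(E_2)$ gives $\Vol_f(F^+)=\Vol_f(F^-)=\tfrac12\Vol_f(F)$ at once, since $E_1\setminus E_2=F^-$ and $E_2\setminus E_1=F^+$. For the interior term I would decompose $H_f$ at each point of the slab into its value $H_f(\Sigma)$ at the foot of the vertical line through that point, plus the increment picked up along the line. The foot value lies in $[m,M]$, so, using $\Vol_f(F^+)=\Vol_f(F^-)$, the foot-value contribution to $\int_{F^+}H_f-\int_{F^-}H_f$ is at most $M\Vol_f(F^+)-m\Vol_f(F^-)=\tfrac12(M-m)\Vol_f(F)$. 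The monotonicity noted in the text guarantees the increment has a definite sign (it is $\ge 0$ on $F^+$ and $\le 0$ on $F^-$), so it enters the estimate as a single nonnegative remainder, namely $\int_F\lvert x_{n+1}-u\rvert\,\langle {\bf n},{\bf e}_{n+1}\rangle e^{-f}\,dV$.

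The hard part will be closing the estimate, because the two ``loose'' contributions do not vanish separately and must be played against each other: the flux through $P$ falls short of $\Area_f(P)$ precisely by the tilt defect $\int_P(1-\langle {\bf n},{\bf e}_{n+1}\rangle)e^{-f}\,dA$, while the vertical drift of $H_f$ supplies the positive remainder above. Discarding either one is too lossy --- for a tilted hyperplane (where $H_f$ is constant, so $M=m$) the two are exactly equal --- so the real content is to show that the tilt defect on $P$ dominates the vertical drift in the slab. Here I would use the Gaussian weight and the volume balance in tandem, rather than the monotonicity alone, to absorb the remainder into the defect. Finally I would justify $R\to\infty$ by checking that the contributions on $S_R$ are negligible; the Gaussian density makes this automatic, which is exactly what lets the argument dispense with the polynomial volume-growth hypotheses of the earlier proofs.
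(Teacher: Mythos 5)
Your Stokes bookkeeping is correct, and the identity you arrive at,
\[
\Area_f(\Sigma)=\int_P\langle {\bf n},{\bf e}_{n+1}\rangle e^{-f}\,dA+\int_{F^+}H_f e^{-f}\,dV-\int_{F^-}H_f e^{-f}\,dV,
\]
checks out (on two parallel hyperplanes $x_{n+1}=b$ and $x_{n+1}=a$ both sides equal $(2\pi)^{-1/2}e^{-b^2/2}$). Be aware that this is \emph{not} the relation the paper's proof uses: when the outward normals of $F^+$ and $F^-$ are tracked as you do, the bulk term enters with the sign $+\int_{F^+}-\int_{F^-}$, whereas the displayed chain in the paper has $-\int_{F^+}+\int_{F^-}$, and it is only with that (reversed) sign that the vertical monotonicity of $H_f$ ($H_f\ge m$ on $F^+$, $H_f\le M$ on $F^-$) immediately yields $-m\Vol_f(F^+)+M\Vol_f(F^-)$ and hence the stated bound. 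With the correct sign, monotonicity bounds $\int_{F^+}H_fe^{-f}$ from \emph{below} rather than above, and one is left with exactly the two competing quantities you isolate: the tilt defect $\int_P(1-\langle{\bf n},{\bf e}_{n+1}\rangle)e^{-f}dA$ and the drift remainder $\int_F|x_{n+1}-u|\,\langle{\bf n},{\bf e}_{n+1}\rangle e^{-f}dV$. So your more careful route is not a stylistic variant; it exposes that the short argument does not close as written.

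That said, your proposal does not prove the lemma. After the (correct) reductions, the entire content of the statement is concentrated in the final absorption step --- showing that the tilt defect on $P$ dominates the vertical-drift remainder in the slab, up to the slack $\tfrac12(M-m)\Vol_f(F)$ --- and at precisely that point you only announce a plan (``use the Gaussian weight and the volume balance in tandem'') with no argument. As you yourself observe, for a tilted hyperplane through the origin the two quantities are exactly equal, so there is no room for a crude estimate: what is needed is a sharp global inequality relating a boundary integral over $P$ to a bulk integral over $F$, and nothing in the proposal establishes it. Until that inequality is proved (or the estimate is closed by some genuinely different mechanism), the argument is incomplete.
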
 
\begin{proof} 
Let $R$ be large enough, such that $B_R$ intersects both $F^+$ and $F^-.$  By  Stokes' theorem together by choosing suitable orientations for objects (see the picture), we have
$$\begin{aligned} 
\Area_f (\Sigma_R)-\Area_f(P_R) +\int_{ F\cap S_R}e^{-f} w&\le \int_{\Sigma_R}e^{-f} w-\int_{P_R}e^{-f} w+\int_{F\cap S_R}e^{-f} w\\
&=\int_{F_R}d(e^{-f} w)=\int_{F_R}d(\divv (e^{-f}{\bf n}))dV\\
 &=\int_{F_R}( e^{-f}\divv({\bf n})-e^{-f}\langle\nabla f, {\bf n}\rangle) dV\\
 &=-\int_{F_R}e^{-f}H_f dV\\
&=-\int_{F_R^+}e^{-f}H_f dV+\int_{F_R^-}e^{-f}H_f dV\\
&\le -m\Vol_f(F_R^+)+M\Vol_f(F_R^-).
\end{aligned}$$
	Thus,
\begin{equation}\label{eq4} \Area_f (\Sigma_R)-\Area_f(P_R) +\int_{ F_R}e^{-f} w\le -m\Vol_f(F_R^+)+M\Vol_f(F_R^-).
\end{equation}
 \includegraphics[scale=.4]{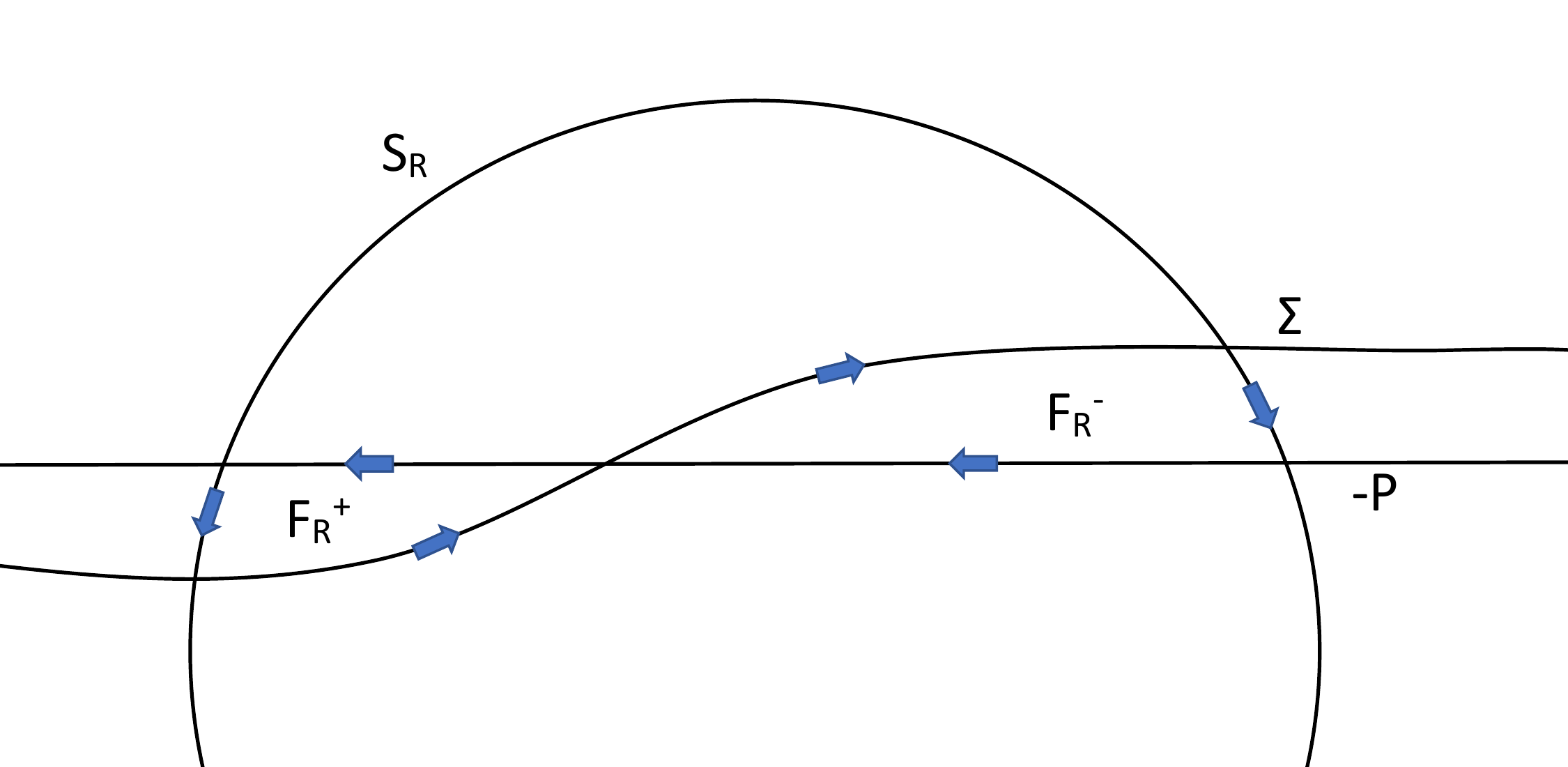}\\
 \vskip.5cm
It is not hard to check that
$$\lim_{R\rightarrow\infty}\int_{S_R\cap F}e^{-f} w=\lim_{R\rightarrow\infty}e^{-R}\int_{S_R\cap F}w=0,$$
and by the assumption that $\Vol_f(E_1)=\Vol_f(E_2),$ 
$$\lim_{R\rightarrow\infty}\Vol_f(F_R^+)=\Vol_f(F^+)=\lim_{R\rightarrow\infty}\Vol_f(F_R^-)=\Vol_f(F^-)=\frac 12\Vol_f(F).$$ 
Taking the limit of both sides of (\ref{eq4}) as $R$ goes to infinity, we get
(\ref{eq3}).\end{proof}
\begin{corollary} [Bernstein type theorem for $\lambda$-hypersurfaces \cite{chwe2}]
If $\Sigma$ is an entire graphic $\lambda$-hypersurface, then it must be a hyperplane.
\end{corollary}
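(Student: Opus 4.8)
The plan is to specialize the Lemma to the constant-curvature case and then pit the resulting one-sided area bound against the Gaussian isoperimetric inequality recalled in the introduction. Since $\Sigma$ is a $\lambda$-hypersurface, $H_f(\Sigma)=\lambda$ is constant, so $M=\sup H_f(\Sigma)$ and $m=\inf H_f(\Sigma)$ coincide with $\lambda$ and hence $M-m=0$. Feeding this into (\ref{eq3}) immediately collapses the estimate to
$$\Area_f(\Sigma)\le \Area_f(P).$$
I would first observe that the construction preceding the Lemma is indeed available here: as Gauss space carries a probability measure we have $\Vol_f(E_1)\le 1<\infty$, and the hyperplane $x_{n+1}=a$ sweeps out every weighted volume as $a$ ranges over $\mathbb R$, so there is a choice of $a$ realizing $\Vol_f(E_1)=\Vol_f(E_2)$. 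Thus $\Sigma$ and the hyperplane $P$ enclose regions of equal weighted volume, which is exactly the hypothesis needed to apply the Lemma.

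Next I would invoke the weighted isoperimetric property of hyperplanes stated in the introduction (\cite{bo}, \cite{suts}): among all hypersurfaces bounding a region of prescribed weighted volume, the hyperplane minimizes weighted area. Applied to the common enclosed volume of $\Sigma$ and $P$, this yields the reverse inequality $\Area_f(P)\le \Area_f(\Sigma)$. Combining the two bounds forces
$$\Area_f(\Sigma)=\Area_f(P),$$
so that $\Sigma$ is itself a minimizer for the Gaussian isoperimetric problem at this volume. (In particular the Lemma already guarantees $\Area_f(\Sigma)<\infty$, so the comparison is not vacuous.)

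The main obstacle, and the only step that goes beyond the Lemma, is the passage from \emph{$\Sigma$ is a minimizer} to \emph{$\Sigma$ is a hyperplane}. This is precisely the rigidity (equality) part of the Gaussian isoperimetric inequality: the minimizing regions are exactly the halfspaces, whose boundaries are hyperplanes. Granting this characterization of the equality case, the identity $\Area_f(\Sigma)=\Area_f(P)$ forces $\Sigma$ to agree with a hyperplane, and the corollary follows. I expect the whole argument to be short; the delicate point to state carefully is that we are using not merely the isoperimetric inequality but its uniqueness/rigidity statement.
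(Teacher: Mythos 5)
Your argument is exactly the one the paper intends: since $H_f(\Sigma)=\lambda$ is constant, $M-m=0$ collapses the Lemma to $\Area_f(\Sigma)\le\Area_f(P)$, the weighted isoperimetric property of hyperplanes gives the reverse inequality $\Area_f(P)\le\Area_f(\Sigma)$, and the equality case (half-spaces are the unique minimizers) forces $\Sigma$ to be a hyperplane. The paper compresses all of this into the single line ``because $M-m=0$ and $P$ is weighted area minimizing''; you have simply made explicit the existence of the volume-matching level $a$ and the uniqueness/rigidity part of the Gaussian isoperimetric inequality, which the paper leaves implicit.
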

\begin{proof}
Because $M-m$=0 and $P$ is weighted area minimizing.  \end{proof}
\begin{corollary} [Bernstein type theorem for self-shrinkers \cite{wa}]
If $\Sigma$ is an entire graphic self-shrinker, then it must be a hyperplane passing through the origin..
\end{corollary}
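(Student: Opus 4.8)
The plan is to reduce to the previous corollary. A self-shrinker in $\mathbb G^{n+1}$ is precisely an $f$-minimal hypersurface, so it satisfies $H_f(\Sigma)\equiv 0$; in particular it is a $\lambda$-hypersurface with $\lambda=0$. Thus $M=\sup H_f=0=\inf H_f=m$, the error term $\frac12(M-m)\Vol_f(F)$ in the Lemma vanishes, and we obtain $\Area_f(\Sigma)\le\Area_f(P)$.

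Since the comparison hyperplane $P$ was chosen so that $\Vol_f(E_1)=\Vol_f(E_2)$, the regions $E_1$ and $E_2$ carry the same weighted volume; because halfspaces minimize weighted area for prescribed weighted volume in Gauss space, $\Area_f(P)\le\Area_f(\Sigma)$. The two inequalities force $\Area_f(\Sigma)=\Area_f(P)$, and by the uniqueness of the Gaussian isoperimetric minimizer $\Sigma$ must itself be a hyperplane. Equivalently, this is exactly the preceding corollary applied with $\lambda=0$.

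Finally I would determine which hyperplanes actually occur. Writing $\Sigma$ as $\sum_{i=1}^{n+1}a_ix_i+a_0=0$, the introduction records its weighted mean curvature as $-a_0/(\sum_{i=1}^{n+1}a_i^2)^{1/2}$. The self-shrinker equation $H_f=0$ then forces $a_0=0$, so $\Sigma$ passes through the origin, which is the claimed refinement over the $\lambda$-hypersurface case.

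The only delicate point is the passage from equality of weighted areas to the rigidity conclusion that $\Sigma$ is a hyperplane: the Lemma together with the isoperimetric inequality only yields $\Area_f(\Sigma)=\Area_f(P)$, and one needs the uniqueness part of the Gaussian isoperimetric theorem (Borell; Sudakov--Tsirelson) to upgrade equality of areas to equality of the sets. I expect this to be the main obstacle, although in the present setup it is already absorbed into the preceding corollary, so the self-shrinker case adds only the elementary computation $a_0=0$.
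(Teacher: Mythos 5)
Your proposal is correct and follows essentially the same route as the paper: reduce to the preceding corollary with $\lambda=0$ to conclude that $\Sigma$ is a hyperplane, then use the formula $H_f=-a_0/(\sum_{i=1}^{n+1}a_i^2)^{1/2}$ to see that $H_f=0$ forces $a_0=0$, i.e.\ the hyperplane passes through the origin. Your remark about needing the uniqueness (rigidity) part of the Gaussian isoperimetric theorem is a fair observation, but it concerns the previous corollary, which the paper likewise leaves implicit.
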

\begin{proof} Because among all hyperplanes, only the ones passing the origin have zero weighted mean curvature.
  \end{proof}

\bibliographystyle{amsplain}

\begin{thebibliography}{10}
\bibitem {bo} C. Borell, \textit{The Brunn-Minkowski inequality in Gauss space}, Invent. Math. 30 (1975), 207-216.
\bibitem {chwe2} Q.M. Cheng and G. Wei, \textit{The gauss image of $\lambda$-hypersurfaces and a Bernstein type problem}. arXiv:1410.5302, 2014. 
\bibitem {echu} K. Ecker and G. Huisken, \textit{Mean Curvature Evolution of Entire Graphs}, Ann. of Math., 130
(1989), 453-471.
 \bibitem {gu} Q. Guang, \textit{Gap and rigidity theorems of $\lambda$-hypersurfaces}, arXiv:1405.4871.

\bibitem {mo1} F. Morgan, {\sl Manifolds with density}, Notices Amer. Math.
 Soc., 52 (2005), 853-858.
\bibitem {mo2} F. Morgan, {\sl Geometric Measure Theory: a Beginner's Guide},
 Academic Press, fourth edition, 2008.
 \bibitem {mo5} F. Morgan, {\sl Manifolds with density and
 Perelman's proof of the Poincar\'{e} Conjecture},
  Amer. Math. Monthly 116 (Feb., 2009), 134-142.
 \bibitem {muwa} O. Munteanu, J. Wang, \textit{Geometry of manifolds with densities}, Adv. Math. 259 (2014), 269-305
\bibitem {suts} V. N. Sudakov and  B. S. Tsirel'son, \textit{Extremal properties of half-spaces for spherically invariant
measures}, J. Soviet Math. (1978), 9-18.
\bibitem {wa} L. Wang, \textit{A Benstein type theorem for self-similar shrinkers}, Geom. Dedicata, 151 (2011),
297-303.
\end{thebibliography}

\end{document}